 \newtheorem{thm}{Theorem}[section]
 \newtheorem{cor}[thm]{Corollary}
 \newtheorem{prop}[thm]{Proposition}
 \theoremstyle{definition}
 \theoremstyle{remark}
 \newtheorem{rem}[thm]{Remark}
\newcommand{\M}{\mathbb{M}}
\newcommand{\A}{\mathbb{A}}
\newcommand{\X}{\mathbb{X}}
\newcommand{\PH}{\phi:\mathbb{A}\times \A \rightarrow \mathbb{X}}
\begin{document}

\title[Characterizations of left derivable maps...]
 {Characterizations of left derivable maps at non-trivial
idempotents on nest algebras}

\author{ Hoger Ghahramani }
\thanks{{\scriptsize
\hskip -0.4 true cm \emph{MSC(2010)}:  47B47; 47L35.
\newline \emph{Keywords}: Nest algebra, Left derivable, Left derivation.\\}}

\address{Department of
Mathematics, University of Kurdistan, P. O. Box 416, Sanandaj,
Iran.}

\email{h.ghahramani@uok.ac.ir; hoger.ghahramani@yahoo.com}

\address{}

\email{}

\thanks{}

\thanks{}

\subjclass{}

\keywords{}

\date{}

\dedicatory{}

\commby{}


\begin{abstract}
Let $Alg \mathcal{N}$ be a nest algebra associated with the nest
$ \mathcal{N}$ on a (real or complex) Banach space $\X$. Suppose
that there exists a non-trivial idempotent $P\in Alg\mathcal{N}$
with range $P(\X) \in \mathcal{N}$ and $\delta:Alg\mathcal{N}
\rightarrow Alg\mathcal{N}$ is a continuous linear mapping
(generalized) left derivable at $P$, i.e.
$\delta(ab)=a\delta(b)+b\delta(a)$
($\delta(ab)=a\delta(b)+b\delta(a)-ba\delta(I)$) for any $a,b\in
Alg\mathcal{N}$ with $ab=P$. we show that $\delta$ is a
(generalized) Jordan left derivation. Moreover, we characterize
the strongly operator topology continuous linear maps $\delta$ on
some nest algebra $Alg\mathcal{N}$ with property that
$\delta(P)=2P\delta(P)$ or $\delta(P)=2P\delta(P)-P\delta(I)$
every idempotent $P$ in $Alg\mathcal{N}$.
\end{abstract}

\maketitle

\section{Introduction}
Throughout this paper all algebras and vector spaces will be over
$\mathbb{F}$, where $\mathbb{F}$ is either the real field
$\mathbb{R}$ or the complex field $\mathbb{C}$. Let $\A$ be an
algebra with unity $1$, $\M$ be a left $\A$-module and $\delta:
\A\rightarrow \M$ be a linear mapping. $\delta$ is said to be a
\emph{left derivation} (or a \emph{generalized left derivation})
if $\delta(ab)=a\delta(b)+b\delta(a)$ (or
$\delta(ab)=a\delta(b)+b\delta(a)-ba\delta(1)$) for all $a, b\in
\A$. It is called a \emph{Jordan left derivation} (or a
\emph{generalized Jordan left derivation}) if
$\delta(a^{2})=2a\delta(a)$ (or
$\delta(a^{2})=2a\delta(a)-a^{2}\delta(1)$) for any $a \in \A$.
Obviously, any (generalized) left derivation is a (generalized)
Jordan left derivation, but in general the converse is not true
(see \cite{Za}, Example 1.1). The concepts of left derivation and
Jordan left derivation were introduced by
Bre$\check{\textrm{s}}$ar and Vukman in \cite{Bre}. For results
concerning left derivations and Jordan left derivations we refer
the readers to \cite{Gho} and the references therein.
\par
In recent years, several authors studied the linear (additive)
maps that behave like homomorphisms, derivations or left
derivations when acting on special products (for instance, see
\cite{Bre2, Gha0, Hou, Ji, Zhu} and the references therein). In
this article we study the linear maps on nest algebras behaving
like left derivations at idempotent-product elements.
\par
Let $\A$ be an algebra with unity $1$, $\M$ be a left $\A$-module
and $\delta: \A\rightarrow \M$ be a linear mapping.  We say that
$\delta$ is \emph{left derivable} (or \emph{generalized left
derivable}) at a given point $z\in \A$ if
$\delta(ab)=a\delta(b)+b\delta(a)$ (or
$\delta(ab)=a\delta(b)+b\delta(a)-ba\delta(1)$) for any $a,b \in
\A$ with $ab=z$. In this paper we characterize the continuous
linear maps on nest algebras which are (generalized) left
derivable at a non-trivial idempotent operator $P$. Moreover, we
describe the strongly operator topology continuous linear maps
$\delta$ on some nest algebra $Alg\mathcal{N}$ with property that
$\delta(P)=2P\delta(P)$ or $\delta(P)=2P\delta(P)-P\delta(I)$
every idempotent $P$ in $Alg\mathcal{N}$.
\par
The following are the notations and terminologies which are used
throughout this article.
\par
Let $\X$ be a Banach space. We denote by $\mathcal{B}(\X)$ the
algebra of all bounded linear operators on $\X$, and $
\mathcal{F}(\X)$ denotes the algebra of all finite rank operators
in $\mathcal{B}(\X)$. A \emph{subspace lattice} $\mathcal{L}$ on
a Banach space $\X$ is a collection of closed (under norm
topology) subspaces of $\X$ which is closed under the formation
of arbitrary intersection and closed linear span (denoted by
$\vee$), and which includes $\{0\}$ and $\X$. For a subspace
lattice $ \mathcal{L}$, we define $Alg\mathcal{L}$ by
\[ Alg\mathcal{L}=\{ T\in \mathcal{B}(\X)\,|\, T(N)\subseteq N
\,for\, all \,N\in \mathcal{L}\}. \]
A totally ordered subspace
lattice $ \mathcal{N}$ on $\X$ is called a \emph{nest} and
$Alg\mathcal{N}$ is called a \emph{nest algebra}. When
$\mathcal{N} \neq \{\{0\},\X\}$, we say that $\mathcal{N}$ is
non-trivial. It is clear that if $\mathcal{N}$ is trivial, then
$Alg\mathcal{N} =\mathcal{B}(\X)$. Denote
$Alg_{\mathcal{F}}\mathcal{N}:= Alg\mathcal{N} \cap
\mathcal{F}(\X)$, the set of all finite rank operators in
$Alg\mathcal{N}$ and for $N\in \mathcal{N}$, let $N_{-} = \vee
\{M\in \mathcal{N}\,|\,M \subset N \}$. The identity element of
nest algebras denote by $I$ and an element $P$ in a nest algebra
is called a \emph{non-trivial idempotent} if $P\neq 0,I$ and
$P^{2}=P$.
\par
Let $\mathcal{N}$ be a non-trivial nest on a Banach space $\X$. If
there exists a non-trivial idempotent $P\in Alg\mathcal{N}$ with
range $P(\X) \in \mathcal{N}$, then we have
$(I-P)(Alg\mathcal{N})P=\{0\}$ and hence
\[ Alg\mathcal{N}= P(Alg\mathcal{N})P\dot{+} P(Alg\mathcal{N})(I-P)\dot{+} (I-P)(Alg\mathcal{N})(I-P) \]
as sum of linear spaces. This is so-called the Peirce
decomposition of $Alg\mathcal{N}$. The sets $P(Alg\mathcal{N})P$,
$P(Alg\mathcal{N})(I-P)$ and $(I-P)(Alg\mathcal{N})(I-P)$ are
closed in $Alg\mathcal{N}$. In fact $P(Alg\mathcal{N})P$ and
$(I-P)(Alg\mathcal{N})(I-P)$ are Banach subalgebras of
$Alg\mathcal{N}$ with unity $P$ and $I-P$, respectively and
$P(Alg\mathcal{N})(I-P)$ is a Banach
$(P(Alg\mathcal{N})P,(I-P)(Alg\mathcal{N})(I-P))$-bimodule. Also
$P(Alg\mathcal{N})(I-P)$ is faithful as a left
$P(Alg\mathcal{N})P$-module as well as a right
$(I-P)(Alg\mathcal{N})(I-P))$-module. For more information on
nest algebras, we refer to \cite{Dav}.
\par
A subspace lattice $ \mathcal{L}$ on a Hilbert space $ \mathbb{H}$
is called a \emph{commutative subspace lattice}, or a \emph{CSL},
if the projections of $ \mathbb{H}$ onto the subspaces of $
\mathcal{L}$ commute with each other. If $ \mathcal{L}$ is a
$CSL$, then $Alg\mathcal{L}$ is called a \emph{CSL algebra}. Each
nest algebra on a Hilbert space is a $CSL$-algebra.
\section{Main results}
In order to prove our results we need the following result.
\begin{thm} \cite{Gha}.\label{c1}
Let $\mathbb{X}$ be a Banach space and let $\PH$ be a continuous
bilinear map with the property that
\[ a,b\in \A, \, ab=1 \Rightarrow \phi(a,b)=\phi(1,1).\]
Then
\[ \phi(a,a)=\phi(a^{2},1)\]
for all $ a \in \A$.
\end{thm}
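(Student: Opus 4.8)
The plan is to perturb the unit by an arbitrary multiple of a given element, exploit that such a perturbation is invertible, and then read off the desired identity from the quadratic term of a power-series expansion. Throughout I regard $\A$ as a unital Banach algebra (this is implicit in the word ``continuous''), so that the Neumann series is at my disposal.

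First I would fix $a\in\A$ and take a scalar $t$ with $|t|\,\|a\|<1$. Then $1+ta$ is invertible, with inverse given by the norm-convergent Neumann series
\[
(1+ta)^{-1}=\sum_{n=0}^{\infty}(-1)^{n}t^{n}a^{n}.
\]
Since $(1+ta)^{-1}(1+ta)=1$, the hypothesis applied to the pair $\bigl((1+ta)^{-1},\,1+ta\bigr)$ gives
\[
\phi\bigl((1+ta)^{-1},\,1+ta\bigr)=\phi(1,1)
\]
for every such $t$. This single relation is what drives the whole argument; the choice to place the inverse in the \emph{first} slot (rather than the product $(1+ta)(1+ta)^{-1}=1$) is what produces $\phi(a^{2},1)$ rather than $\phi(1,a^{2})$ in the end.

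Next I would expand the left-hand side as a power series in $t$. Because $\phi$ is bilinear and bounded and the Neumann series converges in norm, I may substitute the series into the first slot and distribute $\phi$ term by term, the boundedness of $\phi$ controlling the tail; the resulting series $\sum_{n\ge 0}c_{n}t^{n}$ then converges to $\phi\bigl((1+ta)^{-1},1+ta\bigr)$ for small $t$. Keeping $1+ta$ in the second slot and collecting powers of $t$ yields $c_{0}=\phi(1,1)$, $c_{1}=\phi(1,a)-\phi(a,1)$, and, crucially,
\[
c_{2}=\phi(a^{2},1)-\phi(a,a).
\]
Since the left-hand side equals the constant $\phi(1,1)$ on a neighbourhood of $0$, every coefficient $c_{n}$ with $n\ge 1$ must vanish; in particular $c_{2}=0$, which is exactly $\phi(a,a)=\phi(a^{2},1)$.

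The only genuinely delicate point I anticipate is the passage to the power series and the comparison of coefficients: one must justify that $\phi$ may be applied term by term to the convergent Neumann series and that a norm-convergent $\X$-valued power series which is identically constant has vanishing higher coefficients. Both facts rest solely on the continuity (equivalently, boundedness) of $\phi$ together with the norm convergence of the Neumann series, so no structure on $\A$ beyond being a unital Banach algebra is needed. Everything else is routine bookkeeping of the $t^{2}$-term.
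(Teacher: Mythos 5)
Your proof is correct, but note a peculiarity of the comparison: the paper does not prove this theorem at all --- it is imported verbatim from the reference \cite{Gha}, so there is no in-paper argument to measure you against. Judged on its own, your Neumann-series argument is sound and self-contained. The three points you flag as delicate are exactly the right ones, and all hold: continuity of a bilinear map between Banach spaces gives boundedness, so $\phi(\cdot,1+ta)$ passes through the norm-convergent series term by term; the double sum rearranges freely by absolute convergence for $|t|\,\|a\|<1$; and a vector-valued power series that is constant near $0$ has vanishing higher coefficients (apply a bounded linear functional and Hahn--Banach to reduce to the scalar case, or use the divide-by-$t$-and-let-$t\to 0$ induction --- either way the argument covers the real field, where one cannot simply invoke complex analyticity). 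Your remark about slot order is also the correct crux: using $(1+ta)^{-1}(1+ta)=1$ rather than $(1+ta)(1+ta)^{-1}=1$ is precisely what makes the $t^{2}$ coefficient come out as $\phi(a^{2},1)-\phi(a,a)$ instead of $\phi(1,a^{2})-\phi(a,a)$, and the hypothesis $ab=1\Rightarrow\phi(a,b)=\phi(1,1)$ is exactly the one you need for that ordering. As a bonus, your computation of the general coefficient $c_{n}$ yields $\phi(a^{n-1},a)=\phi(a^{n},1)$ for every $n\geq 1$, a slight strengthening of the stated conclusion. This perturbation-of-the-identity technique is the standard one for results of this type in the Banach-algebra literature, so your route is in the expected spirit of the cited source rather than a genuinely exotic alternative.
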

\begin{prop}\label{ld1}
Let $\A$ be a Banach algebra with unity $1$ and $\M$ be a unital
Banach left $\A$-module. Let $\delta:\A \rightarrow \M$ be a
continuous linear map. If $\delta$ is left derivable at $1$, then
$\delta$ is a Jordan left derivation.
\end{prop}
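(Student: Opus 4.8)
The plan is to reduce the statement to Theorem~\ref{c1} by manufacturing from $\delta$ a single continuous bilinear map whose behaviour on factorizations of $1$ is governed by the left-derivability hypothesis, and then reading off the squaring identity from the conclusion of that theorem.

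First I would record that left-derivability at $1$ already forces $\delta(1)=0$. Indeed, since $1\cdot 1=1$, the defining identity applied with $a=b=1$ gives $\delta(1)=1\cdot\delta(1)+1\cdot\delta(1)=2\delta(1)$, whence $\delta(1)=0$. This normalization will be used twice below.

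Next I would define $\phi:\A\times\A\to\M$ by $\phi(a,b)=a\delta(b)+b\delta(a)$, regarding the unital Banach left $\A$-module $\M$ as the Banach space $\X$ in Theorem~\ref{c1}. Bilinearity (indeed symmetry) of $\phi$ is immediate from linearity of $\delta$ and bilinearity of the module action, and the estimate $\|\phi(a,b)\|\le 2\|\delta\|\,\|a\|\,\|b\|$ shows that $\phi$ is continuous. To verify the hypothesis of Theorem~\ref{c1}, suppose $ab=1$; then left-derivability gives $\phi(a,b)=a\delta(b)+b\delta(a)=\delta(ab)=\delta(1)=0$, while $\phi(1,1)=2\delta(1)=0$, so indeed $\phi(a,b)=\phi(1,1)$ for every pair with $ab=1$.

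Theorem~\ref{c1} then delivers $\phi(a,a)=\phi(a^{2},1)$ for every $a\in\A$. Computing the two sides, $\phi(a,a)=2a\delta(a)$ and, using $\delta(1)=0$, $\phi(a^{2},1)=a^{2}\delta(1)+\delta(a^{2})=\delta(a^{2})$; hence $\delta(a^{2})=2a\delta(a)$, which is precisely the assertion that $\delta$ is a Jordan left derivation. The only point that calls for any insight is the choice of the symmetric map $\phi$, so that the left-derivable condition becomes exactly the inverse-type hypothesis of Theorem~\ref{c1}; once this is in place the argument is a routine verification, with the quoted theorem performing the real work of passing from the condition on products equal to $1$ to the squaring identity.
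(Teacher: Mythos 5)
Your proof is correct and follows essentially the same route as the paper: establish $\delta(1)=0$ from $1\cdot 1=1$, define the symmetric continuous bilinear map $\phi(a,b)=a\delta(b)+b\delta(a)$, verify the hypothesis of Theorem~\ref{c1}, and read off $\delta(a^{2})=2a\delta(a)$ from its conclusion. The extra details you supply (the norm estimate for continuity of $\phi$ and the explicit evaluation $\phi(a^{2},1)=\delta(a^{2})$ using $\delta(1)=0$) are just a fuller writing-out of steps the paper leaves implicit.
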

\begin{proof}
Since $1.1=1$, it follows that $\delta(1)=2\delta(1)$. So
$\delta(1)=0$. Define a continuous bilinear map $\phi:\A \times \A
\rightarrow \M$ by $\phi(a,b)=a\delta(b)+b\delta(a)$. Then
$\phi(a,b)=\phi(1,1)$ for all $a,b \in \A$ with $ab=1$, since
$\delta$ is left derivable at $1$. By applying Theorem~\ref{c1},
we obtain $\phi(a,a)=\phi(a^{2},1)$ for all $a\in \A$. So
\[ \delta(a^{2})=2a\delta(a) \quad (a \in \A). \]
\end{proof}
\begin{cor}\label{ld2}
Let $\A$ be a Banach algebra with unity $1$ and $\M$ be a unital
Banach left $\A$-module. Let $x,y\in \A$ with $x+y=1$ and let
$\delta:\A \rightarrow \M$ be a continuous linear map. If
$\delta$ is left derivable at $x$ and $y$, then $\delta$ is a
Jordan left derivation.
\end{cor}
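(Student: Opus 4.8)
The plan is to reduce the statement to Proposition~\ref{ld1}: if I can show that $\delta$ is in fact left derivable at the unit $1$, then Proposition~\ref{ld1} immediately delivers that $\delta$ is a Jordan left derivation. So the whole task becomes upgrading left derivability at the two points $x$ and $y$ (whose sum is $1$) to left derivability at $1$.

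The key idea is a factorization trick that exploits the relation $x+y=1$. Fix arbitrary $a,b\in\A$ with $ab=1$. Then
\[ a(bx)=(ab)x=x \qquad\text{and}\qquad a(by)=(ab)y=y, \]
so $(a,bx)$ is a factorization of $x$ and $(a,by)$ is a factorization of $y$. Applying left derivability at $x$ and at $y$ respectively yields
\[ \delta(x)=a\delta(bx)+(bx)\delta(a), \qquad \delta(y)=a\delta(by)+(by)\delta(a). \]

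Adding these two identities and using linearity of $\delta$ together with $x+y=1$, the right-hand side collapses: the first terms combine to $a\delta(bx+by)=a\delta(b)$ since $bx+by=b(x+y)=b$, and the second terms combine to $b(x+y)\delta(a)=b\delta(a)$, while the left-hand side becomes $\delta(x+y)=\delta(1)$. Hence
\[ \delta(1)=a\delta(b)+b\delta(a) \]
for every $a,b\in\A$ with $ab=1$, which is precisely the assertion that $\delta$ is left derivable at $1$. Proposition~\ref{ld1} then finishes the argument.

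I do not anticipate a serious obstacle: the only thing one has to notice is the trick $a\cdot(bx)=x$, which is exactly what makes the hypotheses at $x$ and $y$ applicable to an arbitrary product decomposition of $1$. Everything else is linearity of $\delta$, the module axioms (note that only left actions appear, so the left-module structure of $\M$ suffices), continuity being inherited automatically, and the observation that the derived identity coincides with the hypothesis of Proposition~\ref{ld1}.
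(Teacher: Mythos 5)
Your proposal is correct and is essentially identical to the paper's own proof: both factor $x=a(bx)$ and $y=a(by)$ for an arbitrary pair with $ab=1$, apply left derivability at $x$ and $y$, sum the resulting identities using $x+y=1$ to conclude that $\delta$ is left derivable at $1$, and then invoke Proposition~\ref{ld1}. No discrepancies to report.
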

\begin{proof}
For $a,b\in \A$ with $ab=1$, we have $abx=x$ and $aby=y$. Since
$\delta$ is left derivable at $x$ and $y$, it follows that
\[ \delta(x)=\delta(abx)=a\delta(bx)+bx\delta(a) \]
and
\[ \delta(y)=\delta(aby)=a\delta(by)+by\delta(a). \]
Combining the two above equations, we get that
\[
\delta(1)=\delta(x+y)=a\delta(bx)+bx\delta(a)+a\delta(by)+by\delta(a)=a\delta(b)+b\delta(a),
\]
i.e. $\delta$ is left derivable at $1$. So from
Proposition~\ref{ld1}, $\delta$ is a Jordan left derivation.
\end{proof}
\begin{rem}
If $\A$ is a $CSL$-algebra or a unital semisimple Banach algebra,
then by \cite{Ji} and \cite{Vuk} every continuous Jordan left
derivation on $\A$ is zero. Hence from Proposition~\ref{ld1}
every continuous linear map $\delta:\A\rightarrow \A$ which is
left derivable at $1$ is zero.
\end{rem}
The following is our main result.
\begin{thm}\label{asli}
Let $\mathcal{N}$ be a nest on a Banach space $\X$, and there
exists a non-trivial idempotent $P\in Alg\mathcal{N}$ with range
$P(\X) \in \mathcal{N}$. If $\delta : Alg\mathcal{N}\rightarrow
Alg\mathcal{N}$ is a continuous left derivable map at $P$, then
$\delta$ is a Jordan left derivation.
\end{thm}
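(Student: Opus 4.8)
The plan is to work through the Peirce decomposition of $Alg\mathcal{N}$ relative to $P$ rather than to try to reduce to left derivability at $I$: the latter is unavailable, since a factorization such as $(P+s)(P+t)=I$ (valid whenever $st=I-P$ in $(I-P)Alg\mathcal{N}(I-P)$) is a relation at $I$, not at $P$, so the hypothesis gives no access to it. Write $\mathcal{A}=Alg\mathcal{N}$ and, using $(I-P)\mathcal{A}P=\{0\}$, set $A_{11}=P\mathcal{A}P$, $A_{12}=P\mathcal{A}(I-P)$, $A_{22}=(I-P)\mathcal{A}(I-P)$, so that every $a\in\mathcal{A}$ splits as $a=a_{11}+a_{12}+a_{22}$ and the multiplication table $A_{12}A_{11}=A_{12}A_{12}=A_{22}A_{11}=A_{22}A_{12}=\{0\}$ holds. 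I would pin down $\delta$ on each summand by substituting carefully chosen factorizations of $P$ into $\delta(ab)=a\delta(b)+b\delta(a)$, and then reassemble the Jordan identity $\delta(a^2)=2a\delta(a)$.

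The easy relations come first. From $P\cdot P=P$ we get $\delta(P)=2P\delta(P)$, hence $P\delta(P)=0$ and $\delta(P)=0$. Since $mP=0=sP$ for $m\in A_{12}$ and $s\in A_{22}$, the factorizations $(P+m)P=P$ and $(P+s)P=P$ yield $P\delta(m)=0$ and $P\delta(s)=0$; thus $\delta(A_{12})$ and $\delta(A_{22})$ lie in $(I-P)\mathcal{A}=A_{22}$. To control $A_{11}$, for invertible $a_{11}\in A_{11}$ (unit $P$) and $m\in A_{12}$ the factorization $(a_{11}+m)a_{11}^{-1}=P$ gives, after cancelling the $m=0$ identity $a_{11}\delta(a_{11}^{-1})+a_{11}^{-1}\delta(a_{11})=0$ and using $a_{11}^{-1}\delta(m)=a_{11}^{-1}P\delta(m)=0$, the relation $m\delta(a_{11}^{-1})=0$; since invertibles span $A_{11}$ this reads $m\delta(c)=0$ for all $c\in A_{11}$, i.e. $m\,(I-P)\delta(c)=0$. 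As $A_{12}$ is faithful as a right $A_{22}$-module and $(I-P)\delta(c)\in A_{22}$, this forces $(I-P)\delta(c)=0$, so $\delta(A_{11})\subseteq P\mathcal{A}$. Applying Theorem~\ref{c1} to the unital Banach algebra $A_{11}$ with the continuous bilinear map $\phi(x,y)=x\delta(y)+y\delta(x)$ (whose defining hypothesis holds because $xy=P$ forces $\phi(x,y)=\delta(P)=0=\phi(P,P)$) gives $2x\delta(x)=P\delta(x^2)$; combined with $\delta(A_{11})\subseteq P\mathcal{A}$ this is exactly $\delta(x^2)=2x\delta(x)$ for $x\in A_{11}$.

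The crux is the coupled factorization. For $m\in A_{12}$ and $s\in A_{22}$ one checks directly that
\[ (P+m)\bigl(P-ms+s\bigr)=P, \]
because $ms\in A_{12}$ exactly cancels $m\cdot s$ in the $A_{12}$-entry. Substituting and using $\delta(P)=0$ and $P\delta(m)=0$, the $A_{22}$-component of the resulting identity reduces to $s\,\delta(m)=0$ for every $s\in A_{22}$; taking $s=I-P$ gives $\delta(m)=0$, so $\delta(A_{12})=\{0\}$. Feeding this back, the $A_{12}$-component of the same identity collapses to $m\,\delta(s)=0$ for all $m\in A_{12}$, $s\in A_{22}$; right-faithfulness of $A_{12}$ over $A_{22}$ then yields $\delta(s)=0$, i.e. $\delta(A_{22})=\{0\}$. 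This is the step I expect to be the main obstacle, precisely because $\delta$ on $A_{22}$ cannot be reached through any product equal to $P$ that exposes the unit $I-P$ of $A_{22}$; the trick is to couple $A_{12}$ and $A_{22}$ through the single relation above and then let faithfulness of the bimodule upgrade the annihilation relations to outright vanishing.

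Finally I would assemble. For a general $a=a_{11}+a_{12}+a_{22}$ we now have $\delta(a)=\delta(a_{11})\in P\mathcal{A}$, $\delta(a_{12})=0$, $\delta(a_{22})=0$, while $a^2=a_{11}^2+(a_{11}a_{12}+a_{12}a_{22})+a_{22}^2$ has its $A_{12}$- and $A_{22}$-parts annihilated by $\delta$. A short Peirce computation (using $A_{12}(A_{11}+A_{12})=\{0\}$ and $A_{22}(A_{11}+A_{12})=\{0\}$) shows both $\delta(a^2)$ and $2a\delta(a)$ collapse to $2a_{11}\delta(a_{11})$, which equals $\delta(a_{11}^2)$ by the previous paragraph. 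Hence $\delta(a^2)=2a\delta(a)$ for all $a$, i.e. $\delta$ is a Jordan left derivation.
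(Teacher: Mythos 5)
Correct, and essentially the paper's own argument: you use the Peirce decomposition relative to $P$, prove $\delta(P)=0$, reduce the $(1,1)$-corner to Theorem~\ref{c1}, and kill the $A_{12}$- and $A_{22}$-parts with factorizations of $P$ --- including the same key identity $(P+m)(P-ms+s)=P$ --- together with faithfulness of $P\mathcal{A}(I-P)$ as a right $(I-P)\mathcal{A}(I-P)$-module. The only deviations are organizational: you merge the paper's Steps 5 and 6 by separating the $A_{12}$- and $A_{22}$-components of that single identity (your $s=I-P$ specialization recovers exactly the paper's Step 5 factorization $(P-a_{12})(I+a_{12})=P$), you apply Theorem~\ref{c1} directly rather than through the compressed maps of the paper's Step 1, and for $(I-P)\delta(A_{11})=0$ you use $(a_{11}+m)a_{11}^{-1}=P$ plus faithfulness where the paper gets it more directly from $(I-P+b_{11})a_{11}=P$.
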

\begin{proof}
As a notational convenience, we denote $\A=Alg\mathcal{N}$,
$\A_{11}=P\A P$, $\A_{12}=P\A (I-P)$ and $\A_{22}=(I-P)\A (I-P)$.
As mentioned in the introduction $\A=\A_{11} \dot{+} \A_{12}
\dot{+}\A_{22}$. Throughout the proof, $a_{ij}$ and $b_{ij}$ will
denote arbitrary elements in $\A_{ij}$ for $1\leq i,j \leq 2$.
\par
First we show that $\delta(P)=0$. Since $P^{2}=P$, we have
$2P\delta(P)=\delta(P)$. So $2P\delta(P)=P\delta(P)$ and
$(I-P)\delta(P)=0$. Thus $P\delta(P)=0$ and hence $\delta(P)=0$.
\par
We complete the proof by checking some steps.
\\
\textbf{Step 1.} $P\delta(a_{11}^{2})P=2a_{11}P\delta(a_{11})P $
and $P\delta(a_{11}^{2})(I-P)=2a_{11}P\delta(a_{11})(I-P)$.
\\ \par
 For any $a_{11}, b_{11}$ with $a_{11}b_{11}=P$, we have
\begin{equation}\label{1}
 a_{11}\delta(b_{11})+b_{11}\delta(a_{11})=\delta(P).
\end{equation}
Multiplying this identity by $P$ both on
the left and on the right we find
\[ a_{11}P\delta(b_{11})P+b_{11}P\delta(a_{11})P=P\delta(P)P \quad (a_{11}b_{11}=P).\]
Define a continuous linear map $d:\A_{11}\rightarrow \A_{11}$ by
$d(a_{11})=P\delta(a_{11})P$. By above identity $d$ is left
derivable at $P$. Hence by Proposition~\ref{ld1}, $d$ is a Jordan
left derivation. So
\[ P\delta(a_{11}^{2})P=2a_{11}P\delta(a_{11})P \quad (a_{11}\in \A_{11}).\]
Now, multiplying the Equation\eqref{1} from the left by $P$, from
the right by $(I-P)$ we arrive at
\[ a_{11}P\delta(b_{11})(I-P)+b_{11}P\delta(a_{11})(I-P)=P\delta(P)(I-P) \quad (a_{11}b_{11}=P).\]
Define a continuous linear map $D:\A_{11}\rightarrow \A_{12}$ by
$D(a_{11})=P\delta(a_{11})(I-P)$. So $D$ is left derivable at $P$
and from Proposition~\ref{ld1}, $D$ is a Jordan left derivation.
Thus
\[ P\delta(a_{11}^{2})(I-P)=2a_{11}P\delta(a_{11})(I-P) \quad (a_{11}\in \A_{11}).\]
\textbf{Step 2.} $P\delta(a_{22})=0 $.
\\ \par
Since $(P+a_{22})P=P$, we have
\[ (P+a_{22})\delta(P)+P\delta(P+a_{22})=\delta(P).\]
From $\delta(P)=0$ we get $$P\delta(a_{22})=0. $$
\textbf{Step 3.} $P\delta(a_{12})=0 $.
\\ \par
Applying $\delta$ to $(P+a_{12})P=P$, we get
\[ (P+a_{12})\delta(P)+P\delta(P+a_{12})=\delta(P).\]
Since $\delta(P)=0$, it follows that $$P\delta(a_{12})=0. $$
\textbf{Step 4.} $(I-P)\delta(a_{11})(I-P)=0 $.
\\ \par
For any $a_{11}, b_{11}$ with $b_{11}a_{11}=P$, we have
$(I-P+b_{11})a_{11}=P$ and hence
\begin{equation*}
 (I-P+b_{11})\delta(a_{11})+a_{11}\delta(I-P+b_{11})=\delta(P).
\end{equation*}
Multiplying this identity by $I-P$ both on the left and on the
right we arrive at
\[ (I-P)\delta(a_{11})(I-P)=0.\]
Since any element in a Banach algebras is a sum of invertible
elements, by the linearity of $\delta$ and above identity we have
\[ (I-P)\delta(a_{11})(I-P)=0\]
for all $a_{11} \in \A_{11}$.
\\
\textbf{Step 5.} $(I-P)\delta(a_{12})(I-P)=0 $.
\\ \par
Since $(P-a_{12})(I+a_{12})=P$, it follows that
\[ (P-a_{12})\delta(I+a_{12})+(I+a_{12})\delta(P-a_{12})=\delta(P).\]
Multiplying this identity by $I-P$ both on the left and on the
right, from $\delta(P)=0$  we find
\[(I-P)\delta(a_{12})(I-P)=0.\]
\textbf{Step 6.} $(I-P)\delta(a_{22})(I-P)=0 $.
\\ \par
Applying $\delta$ to $(P+a_{12})(P-a_{12}a_{22}+a_{22})=P$, we
see that
\[ (P+a_{12})\delta(P-a_{12}a_{22}+a_{22})+(P-a_{12}a_{22}+a_{22})\delta(P+a_{12})=\delta(P).\]
Now, multiplying this identity from the left by $P$, from the
right by $I-P$ and by Steps 2,3 and 5 and the fact that
$\delta(P)=0$, we get $a_{12}(I-P)\delta(a_{22})(I-P)=0$. Since
$a_{12}\in \A_{12}$ is arbitrary, we have
$\A_{12}((I-P)\delta(a_{22})(I-P))=\{0\}$. From the fact that
$\A_{12}$ is faithful as right $\A_{22}$-module, we arrive at
$$(I-P)\delta(a_{22})(I-P)=0. $$
\par
Since $ab=PaPbP+PaPb(I-P)+Pa(I-P)b(I-P)+(I-P)a(I-P)b(I-P)$, for
any $a,b \in \A$, by Steps 1--6, it follows that $\delta$ is a
Jordan left derivation.
\end{proof}
Our next result characterizes the linear mappings on
$Alg\mathcal{N}$ which are generalized left derivable at $P$.
\begin{thm}\label{gen}
Let $\mathcal{N}$ be a nest on a Banach space $\X$, and there
exists a non-trivial idempotent $P\in Alg\mathcal{N}$ with range
$P(\X) \in \mathcal{N}$. If $\delta: Alg\mathcal{N}\rightarrow
Alg\mathcal{N}$ is a continuous generalized left derivable map at
$P$, then $\delta$ is a generalized Jordan left derivation.
\end{thm}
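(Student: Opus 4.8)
The plan is to reduce the generalized situation to the ordinary one already settled in Theorem~\ref{asli} by means of the standard substitution $\Delta(a)=\delta(a)-a\delta(I)$. First I would check that $\Delta$ is a well-defined continuous linear map from $Alg\mathcal{N}$ into itself: linearity and continuity are inherited from $\delta$ together with the boundedness of left multiplication by the fixed operator $\delta(I)\in Alg\mathcal{N}$, and the range lands back in $Alg\mathcal{N}$ because $Alg\mathcal{N}$ is an algebra containing both $\delta(a)$ and $a\delta(I)$.

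The key step is to verify that $\Delta$ is (ordinary) left derivable at $P$. For any $a,b\in Alg\mathcal{N}$ with $ab=P$, I would expand
\[ a\Delta(b)+b\Delta(a)=a\delta(b)+b\delta(a)-ab\delta(I)-ba\delta(I). \]
Using $ab=P$ and the generalized left derivability hypothesis $\delta(P)=a\delta(b)+b\delta(a)-ba\delta(I)$, the terms involving $ba\delta(I)$ cancel and the right-hand side collapses to $\delta(P)-P\delta(I)=\Delta(P)=\Delta(ab)$. Hence $\Delta(ab)=a\Delta(b)+b\Delta(a)$ whenever $ab=P$, so $\Delta$ is left derivable at $P$, and Theorem~\ref{asli} applies to give that $\Delta$ is a Jordan left derivation, that is, $\Delta(a^{2})=2a\Delta(a)$ for all $a\in Alg\mathcal{N}$.

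Finally I would translate this back to $\delta$: substituting $\Delta(a^{2})=\delta(a^{2})-a^{2}\delta(I)$ and $2a\Delta(a)=2a\delta(a)-2a^{2}\delta(I)$ into the Jordan identity and simplifying yields $\delta(a^{2})=2a\delta(a)-a^{2}\delta(I)$, which is precisely the defining identity of a generalized Jordan left derivation. Once the substitution is in place every step is a direct computation, so I do not expect a serious obstacle; the only point demanding a little care is the cancellation in the left-derivability check, where one must respect the asymmetry between $ab$, which equals $P$, and $ba$, which in general does not.
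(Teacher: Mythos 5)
Your proposal is correct and follows exactly the paper's own route: define $\Delta(a)=\delta(a)-a\delta(I)$, verify it is continuous and left derivable at $P$, invoke Theorem~\ref{asli}, and translate back. The only difference is that you spell out the cancellation showing $\Delta$ is left derivable at $P$, which the paper dismisses as ``easy to see''; your verification is accurate.
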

\begin{proof}
Define $\Delta:Alg\mathcal{N}\rightarrow Alg\mathcal{N}$ by
$\Delta(a)=\delta(a)-a\delta(1)$. It is easy too see that $\Delta$
is a continuous left derivable map at $P$. By Theorem~\ref{asli},
$\Delta$ is a Jordan left derivation. Therefore
\begin{equation*}
\begin{split}
\delta(a^{2})&=\Delta(a^{2})+a^{2}\delta(1)\\ &
=2a\Delta(a)+a^{2}\delta(1)\\ &
=2a(\delta(a)-a\delta(1))+a^{2}\delta(1)\\
&=2a\delta(a)-a^{2}\delta(1)
\end{split}
\end{equation*}
for all $a\in Alg\mathcal{N}$. So $\delta$ is a generalized Jordan
left derivation.
\end{proof}
Since every continuous Jordan left derivation on a $CSL$ algebra
is zero \cite{Ji}, we have the following result.
\begin{cor}
Let $\mathcal{N}$ be a non-trivial nest on a Hilbert space
$\mathbb{H}$. Let $P$ be a non-trivial idempotent in
$Alg\mathcal{N}$ with range $P(\mathbb{H}) \in \mathcal{N}$ and
$\delta:Alg\mathcal{N}\rightarrow Alg\mathcal{N}$ be a continuous
linear map.
\begin{enumerate}
\item[(i)] If $\delta$ is left derivable at $P$, then $\delta$ is zero.
\item[(ii)] If $\delta$ is generalized left derivable at $P$, then $\delta(a)=a\delta(1)$ for all $a\in Alg\mathcal{N}$.
\end{enumerate}
\end{cor}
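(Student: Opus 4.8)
The plan is to deduce both parts as immediate consequences of the structural Theorems~\ref{asli} and \ref{gen} together with the vanishing result for continuous Jordan left derivations on $CSL$ algebras from \cite{Ji}. The crucial preliminary observation, already recorded in the introduction, is that a nest algebra on a Hilbert space is automatically a $CSL$ algebra; hence $Alg\mathcal{N}$ falls within the scope of \cite{Ji}, and every continuous Jordan left derivation on it must be zero. This is precisely what converts the ``(generalized) Jordan left derivation'' conclusions of the two theorems into the genuine identities asserted in the corollary.

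For part (i), I would first invoke Theorem~\ref{asli}: since $\delta$ is a continuous linear map left derivable at the non-trivial idempotent $P$ with $P(\mathbb{H})\in\mathcal{N}$, it is a Jordan left derivation. As $\delta$ is continuous and $Alg\mathcal{N}$ is a $CSL$ algebra, the cited result forces $\delta=0$, which is exactly the claim.

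For part (ii), I would apply Theorem~\ref{gen} to conclude that $\delta$ is a generalized Jordan left derivation, i.e. $\delta(a^{2})=2a\delta(a)-a^{2}\delta(1)$ for all $a$. Following the device used in the proof of Theorem~\ref{gen}, set $\Delta(a)=\delta(a)-a\delta(1)$. Since $\delta(1)\in Alg\mathcal{N}$ and $Alg\mathcal{N}$ is an algebra, $\Delta$ is a well-defined continuous linear self-map of $Alg\mathcal{N}$, and a direct substitution into the generalized Jordan identity yields $\Delta(a^{2})=2a\Delta(a)$, so $\Delta$ is a Jordan left derivation. Applying the $CSL$ vanishing result of \cite{Ji} to $\Delta$ gives $\Delta=0$, that is $\delta(a)=a\delta(1)$ for all $a\in Alg\mathcal{N}$, as required.

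There is essentially no analytic obstacle in this corollary: all the substantial work lies in Theorems~\ref{asli} and \ref{gen} and in the cited vanishing theorem. The only point meriting a moment's care is the verification that $\Delta$ maps into $Alg\mathcal{N}$ and is indeed a Jordan left derivation, but this is immediate once one notes that $a\delta(1)$ remains in the algebra and that the generalized Jordan identity is tailored so that the extra term cancels under the substitution $\delta\mapsto\Delta$.
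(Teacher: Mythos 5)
Your proposal is correct and follows essentially the same route as the paper: Theorem~\ref{asli} plus the vanishing theorem of \cite{Ji} for continuous Jordan left derivations on $CSL$ algebras for part (i), and the substitution $\Delta(a)=\delta(a)-a\delta(1)$ reducing part (ii) to the same vanishing result. Incidentally, your appeal to Theorem~\ref{gen} in part (ii) is the correct citation; the paper's own proof of (ii) says ``By Theorem~\ref{asli}'' where it clearly means Theorem~\ref{gen}, so your version silently repairs a small slip in the original.
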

\begin{proof}
(i) Since every continuous Jordan left derivation on a $CSL$
algebra is zero \cite{Ji}, by Theorem~\ref{asli}, $\delta$ is
zero.
\\
(ii) By Theorem~\ref{asli}, $\delta$ is a generalized Jordan left
derivation, so the mapping $\Delta:Alg\mathcal{N}\rightarrow
Alg\mathcal{N}$ defined by $\Delta(a)=\delta(a)-a\delta(1)$ is a
continuous Jordan left derivation. Therefore $\Delta=0$ and hence
$\delta(a)=a\delta(1)$ for all $a\in Alg\mathcal{N}$.
\end{proof}
Now, we characterize the strongly operator topology continuous
(generalized) left Jordan derivations on some nest algebras.
\begin{prop}\label{ppp}
Let $\mathcal{N}$ be a nest on a Banach space $\X$, with each
$N\in \mathcal{N}$ complemented in $\X$ whenever $N_{-} = N$. Let
$\delta: Alg\mathcal{N}\rightarrow Alg\mathcal{N}$ be a strong
operator topology continuous linear map. Then:
\begin{enumerate}
\item[(i)] If $\delta(P)=2P\delta(P)$ for every idempotent $P$ in
$Alg\mathcal{N}$, then $\delta=0$.
\item[(ii)] If $\delta(P)=2P\delta(P)-P\delta(I)$ for every idempotent $P$ in
$Alg\mathcal{N}$, then $\delta(a)=a\delta(I)$ for all $a\in
Alg\mathcal{N}$.
\end{enumerate}
\end{prop}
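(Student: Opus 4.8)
The plan is to observe that the hypothesis in each part is really a pointwise constraint on idempotents, reduce part (ii) to part (i), and in part (i) first deduce that $\delta$ annihilates every idempotent and then promote this to $\delta=0$ by a strong-operator-topology density argument. For part (i), the first thing I would note is that for any idempotent $P$ the element $I-2P$ lies in $Alg\mathcal{N}$ and is its own inverse, since $(I-2P)^2=I-4P+4P^2=I$ because $P^2=P$; in particular it is invertible in $\mathcal{B}(\X)$ over either scalar field. Rewriting the hypothesis $\delta(P)=2P\delta(P)$ as $(I-2P)\delta(P)=0$ and multiplying on the left by $I-2P$ then yields $\delta(P)=0$. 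Thus the assumption of (i) is exactly the statement that $\delta$ vanishes on every idempotent of $Alg\mathcal{N}$ (the choice $P=I$ recovering $\delta(I)=0$ as a special case).

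The remaining task in (i) is to pass from ``$\delta$ vanishes on idempotents'' to ``$\delta=0$'', and this is where both the structural hypothesis on $\mathcal{N}$ and the continuity of $\delta$ must be used. The strategy is to show that the linear span of the idempotents of $Alg\mathcal{N}$ is dense in $Alg\mathcal{N}$ in the strong operator topology; once this is known, the linearity and SOT-continuity of $\delta$ force $\delta\equiv 0$ on the whole algebra. I would establish the density in two stages: first that the finite-rank operators $Alg_{\mathcal{F}}\mathcal{N}$ are SOT-dense in $Alg\mathcal{N}$, which is where the hypothesis that each $N$ with $N_-=N$ is complemented is needed to run a Banach-space version of the Erdos density theorem; and second that every finite-rank operator in $Alg\mathcal{N}$ is a sum of rank-one operators of $Alg\mathcal{N}$ and that each such rank-one operator lies in the linear span of idempotents.

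For that last point, a rank-one operator $x\otimes f\in Alg\mathcal{N}$ with $f(x)\neq 0$ is, after rescaling $f$ so that $f(x)=1$, already idempotent. When $f(x)=0$, membership in $Alg\mathcal{N}$ forces $x\in N$ and $f$ to annihilate $N_-$ for some $N\in\mathcal{N}$; in the case $N_-=N$ the complementation hypothesis supplies an idempotent $E\in Alg\mathcal{N}$ with range $N$, and a direct computation using $Ex=x$, $f\circ E=0$ and $f(x)=0$ shows that $E+x\otimes f$ is idempotent, so that $x\otimes f=(E+x\otimes f)-E$ is a difference of idempotents. I expect the main obstacle to be assembling this density argument rigorously: justifying the Banach-space density theorem under precisely the stated complementation condition, and handling the rank-one operators attached to the jumps $N_-\neq N$ of the nest, where the compatible idempotents must be chosen a little more carefully.

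Finally, part (ii) reduces cleanly to part (i). I would set $\Delta(a)=\delta(a)-a\delta(I)$, which is again SOT-continuous because right multiplication by the fixed operator $\delta(I)$ is SOT-continuous. A one-line check shows $\Delta$ satisfies the hypothesis of (i): from $\delta(P)=2P\delta(P)-P\delta(I)$ one gets $2P\Delta(P)=2P\delta(P)-2P\delta(I)=\delta(P)-P\delta(I)=\Delta(P)$. Hence $\Delta=0$ by part (i), which is exactly $\delta(a)=a\delta(I)$ for all $a\in Alg\mathcal{N}$.
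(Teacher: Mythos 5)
Your proposal follows essentially the same route as the paper's proof: deduce $\delta(P)=0$ for every idempotent $P$ from the hypothesis, use that $Alg_{\mathcal{F}}\mathcal{N}$ lies in the linear span of the idempotents of $Alg\mathcal{N}$ and that $Alg_{\mathcal{F}}\mathcal{N}$ is SOT-dense in $Alg\mathcal{N}$, and reduce (ii) to (i) via $\Delta(a)=\delta(a)-a\delta(I)$. The two density facts you single out as the main obstacles are precisely the ones the paper does not prove either but quotes from the literature (\cite{Hou} for the span-of-idempotents statement and \cite{Spa} for the SOT-density of finite-rank operators under the stated complementation condition), so in substance your argument coincides with the paper's.
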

\begin{proof}
(i) For arbitrary idempotent operator $P\in Alg\mathcal{N}$, by
hypothesis we have $\delta(P)=2P\delta(P)$. So
$2P\delta(P)=P\delta(P)$ and $(I-P)\delta(P)=0$. Therefore
$P\delta(P)=0$ and hence $\delta(P)=0$.
\par
Notice that $Alg_{\mathcal{F}}\mathcal{N}$ is contained in the
linear span of the idempotents in $Alg\mathcal{N}$ (see
\cite{Hou}). So we see that $\delta(F)=0$ for all finite rank
operator $F$ in $Alg\mathcal{N}$. Since $\delta$ is continuous
under the strong operator topology and
$\overline{Alg_{\mathcal{F}}\mathcal{N}}^{SOT}=Alg\mathcal{N}$
(see \cite{Spa}), we find that $\delta(a)=0$ for all $a\in
Alg\mathcal{N}$.
\\
(ii) Define $\Delta:Alg\mathcal{N}\rightarrow Alg\mathcal{N}$ by
$\Delta(a)=\delta(a)-a\delta(I)$. It is easy too see that $\Delta$
is a continuous left map satisfying $\Delta(P)=2P\Delta(P)$ for
every idempotent $P$ in $Alg\mathcal{N}$. So by (i) we have
$\Delta=0$ and hence $\delta(a)=a\delta(I)$ for all $a\in
Alg\mathcal{N}$.
\end{proof}
It is obvious that the nests on Hilbert spaces, finite nests and
the nests having order-type $\omega + 1$ or $1 + \omega^{*}$,
where $\omega$ is the order-type of the natural numbers, satisfy
the condition in Proposition~\ref{ppp} automatically.



\bibliographystyle{amsplain}
\bibliography{xbib}

\end{document}